\newtheorem{theo}{Theorem}[section]
\newtheorem{coro}{Corollary}[section]
\newtheorem{lem}{Lemma}[section]
\theoremstyle{definition}
\newtheorem{exmp}{Example}[section]
\theoremstyle{remark}
\newtheorem{rem}{Remark}[section]
\numberwithin{equation}{section}
\begin{document}

\title[Oscillations of differential equations]{Oscillation criteria for
differential equations with several retarded arguments}

\author[G. Infante]{G. Infante$^\diamond$}
\address{Dipartimento di Matematica ed Informatica\\
Universit\`{a} della Calabria\\
87036 Arcavacata di Rende, Cosenza, Italy}
\email{gennaro.infante@unical.it}

\author[R. Koplatadze]{R. Koplatadze$^*$}
\address{Department of Mathematics\\
Tbilisi State University\\
Tbilisi, Georgia}
\email{r\_koplatadze@yahoo.com}

\author[I. P. Stavroulakis]{I. P. Stavroulakis}
\address{Department of Mathematics\\
University of Ioannina\\
451 10 \ Ioannina, Greece}
\email{ipstav@cc.uoi.gr}

\subjclass[2010]{Primary 34K11; Secondary 34K06.}
\keywords{Oscillation; retarded, differential equations; non-monotone
arguments.}

\begin{abstract}
Consider the first-order linear differential equation with several retarded
arguments%
\begin{equation*}
x^{\prime }(t)+\sum\limits_{i=1}^{m}p_{i}(t)x(\tau _{i}(t))=0,\;\;\;t\geq
t_{0},
\end{equation*}%
where the functions $p_{i},\tau _{i}\in C([t_{0,}\infty ),\mathbb{R}^{+}),$
for every $i=1,2,\ldots ,m,$ $\tau _{i}(t)\leq t$ \ for $t\geq t_{0}$ and $%
\lim_{t\rightarrow \infty }\tau _{i}(t)=\infty $. 
In this paper the state-of-the-art on
the oscillation of all solutions to these equations is reviewed and 
new sufficient conditions
for the oscillation are established, especially in the case
of nonmonotone arguments. Examples illustrating the results are given.
\end{abstract}

\maketitle

\section{Introduction}

Consider the differential equation with several non-monotone retarded
arguments%
\begin{equation}
x^{\prime }(t)+\sum_{i=1}^{m}p_{i}(t)x(\tau _{i}(t))=0. \quad t\geq t_{0},%
\end{equation}%
where the functions $p_{i},\tau _{i}\in C([t_{0,}\infty ),\mathbb{R}^{+}),$
for every $i=1,2,\ldots ,m$, $($here $\mathbb{R}^{+}=[0,\infty )),$ $\tau
_{i}(t)\leq t$ \ for $t\geq t_{0}$ and $\lim\limits_{t\rightarrow \infty
}\tau_{i}(t)=\infty $.

Let $T_0\in[t_0,+\infty)$, $\tau(t)=\min\{\tau_i(t):i=1,\dots,m\}$ and $%
\tau_{(-1)}(t)=\sup\{s:\tau(s)\le t\}$. By a solution of the equation (1.1)
we understand a function $u\in C([T_0,+\infty);R)$, continuously
differentiable on $[\tau_{(-1)}(T_0),+\infty)$ and that satisfies (1.1) for $%
t\ge\tau_{(-1)}(T_0)$. Such a solution is called \textit{oscillatory} if it
has arbitrarily large zeros, and otherwise it is called \textit{%
nonoscillatory}.

In the special case where $m=1$ equation (1.1) reduces to the equation%
\begin{equation}
x^{\prime }(t)+p(t)x(\tau (t))=0,\;\;\;t\geq t_{0},
\end{equation}%
where the functions $p,\tau \in C([t_{0,}\infty ),\mathbb{R}^{+})$, $\tau
(t)\leq t$ \ for $t\geq t_{0}$ and $\lim_{t\rightarrow \infty }\tau
(t)=\infty .$

For the general theory of these equations the reader is referred to
\cite{6, 9, 11, 12, 23}.

The problem of establishing sufficient conditions for the oscillation of all
solutions to the differential equation (1.2) has been the subject of many
investigations. See, for example, \cite{1, 2, 3, 4, 5, 6, 7, 14, 15, 16, 17, 18, 19, 20, 21, 24, 25, 26, 27, 28, 29}
and the references cited therein.

In this paper we present the state-of-the-art on the oscillation of all
solutions to the equation $(1.1)$ in the case of several non-monotone
arguments and especially when the well known oscillation conditions (for $m=1)$%
\begin{equation*}
\underset{t\rightarrow \infty }{\lim \sup }\int\nolimits_{\tau
(t)}^{t}p(s)ds>1\text{ \ \ \ and \ \ }\liminf_{t\rightarrow \infty
}\int_{\tau (t)}^{t}p(s)ds>\frac{1}{e}\,.
\end{equation*}%
are not satisfied.

\section{Oscillation Criteria for Equation (1.1)}

The first systematic study for the oscillation of all solutions to the
equation $(1.2)$ was made by Myshkis. In 1950 \cite{25} he proved that every
solution oscillates if
\begin{equation*}
\limsup_{t\rightarrow \infty }[t-\tau (t)]<\infty \;\;\;\mathrm{{and}%
\;\;\;\liminf_{t\rightarrow \infty }[t-\tau (t)]\liminf_{t\rightarrow
\infty} p(t)>\frac{1}{e}\,.}
\end{equation*}

In 1972, Ladas, Lakshmikantham and Papadakis \cite{21} proved that the same
conclusion holds if in addition $\tau $ is a non-decreasing function and%
\begin{equation}
A:=\limsup_{t\rightarrow \infty }\int_{\tau (t)}^{t}p(s)ds>1.
\end{equation}

\vskip0.2cm In 1979, Ladas \cite{20} established integral conditions for the
oscillation of the equation $(1.2)$ with constant delay, while in 1982,
Koplatadze and Canturija \cite{17} established the following result. If
\begin{equation}
\mathfrak{a}:=\liminf_{t\rightarrow \infty }\int_{\tau (t)}^{t}p(s)ds>\frac{1}{e}\,,
\end{equation}%
then all solutions of the equation $(1.2)$ oscillate; if
\begin{equation}
\limsup_{t\rightarrow \infty }\int_{\tau (t)}^{t}p(s)ds<\frac{1}{e}\,,
\end{equation}%
then the equation $(1.2)$ has a non-oscillatory solution.

\vskip0.2cm Concerning the constants $1$ and $\frac{1}{e}$ which appear in
the conditions $(2.1)$, $(2.2)$ and $(2.3),$ in 2011, Berezansky and
Braverman \cite{2} established the following:

\begin{theo}[{\cite[Theorem 2.5]{2}}]
For any $\alpha\in (1/e,1)$ there exists a non-oscillatory equation
\begin{equation*}
x^{\prime }(t)+p(t)x(t-\tau )=0,\;\;\text{\ }\tau >0
\end{equation*}%
with $p(t)\geq 0$ such that
\begin{equation*}
\limsup_{t\rightarrow \infty }\int_{t-\tau }^{t}p(s)ds=\alpha.
\end{equation*}
\end{theo}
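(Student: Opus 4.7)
My plan is to construct explicitly, for each $\alpha \in (1/e, 1)$, a continuous weight $p \ge 0$ and a positive (hence nonoscillatory) solution of the lag-$1$ equation $x'(t) + p(t) x(t-1) = 0$ for which $\int_{t-1}^{t} p(s)\, ds$ attains the value $\alpha$ on arbitrarily large intervals but never exceeds it. I fix a sequence $a_1 < a_2 < \cdots \to \infty$ with $a_{n+1} - a_n \ge 2$ and a parameter $\delta \in (0,1)$, and let $p$ be supported in the pairwise disjoint intervals $[a_n - \delta, a_n]$, normalized so that $\int_{a_n - \delta}^{a_n} p(s)\, ds = \alpha$ for every $n$ (for instance, a triangular bump of height $2\alpha/\delta$).

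For the $\limsup$ computation, any window $[t-1,t]$ has length $1$ and, by the spacing $a_{n+1} - a_n \ge 2$, contains at most one bump; hence $\int_{t-1}^t p(s)\, ds \le \alpha$ for every $t$. When $t \in [a_n, a_n + 1 - \delta]$ the window contains the $n$-th bump in its entirety and no other, so the integral equals exactly $\alpha$. Letting $n \to \infty$ along these intervals then yields $\limsup_{t \to \infty} \int_{t-1}^t p(s)\, ds = \alpha$.

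To build the nonoscillatory solution, I set $x \equiv 1$ on $[t_0, a_1 - 1]$ and extend step by step. Writing $y_n := x(a_n)$ with $y_0 := 1$, I show by induction that $x \equiv y_{n-1}$ on the flat interval $[a_{n-1}, a_n - \delta]$ where $p$ vanishes. On the bump $[a_n - \delta, a_n]$, the retarded argument $t - 1$ lies in $[a_n - 1 - \delta,\, a_n - 1] \subset (a_{n-1}, a_n - \delta)$ because $a_n - a_{n-1} \ge 2 > 1 + \delta$, so $x(t-1) = y_{n-1}$ is constant; the equation therefore reduces to $x'(t) = -p(t) y_{n-1}$, and integrating over the bump gives $y_n = (1 - \alpha) y_{n-1}$. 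Hence $y_n = (1-\alpha)^n > 0$, and $x$ is positive throughout (strictly decreasing across each bump, constant between them), so it is the required nonoscillatory solution.

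The construction is essentially forced once one notices that placing the bumps far apart makes the lagged term piecewise constant on each spike, whence each bump simply multiplies the solution by the positive factor $1-\alpha$. The only mild technicality is that the paper's standing hypothesis requires $p$ to be continuous, which is why I replace the naive rectangular bump by a continuous profile of total mass $\alpha$; this choice affects neither the $\limsup$ computation nor the induction defining the $y_n$.
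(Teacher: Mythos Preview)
The paper does not supply its own proof of this statement: Theorem~2.1 is quoted verbatim from \cite[Theorem~2.5]{2} as background, with no argument given in the present paper. So there is no in-paper proof to compare against.

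That said, your construction is correct and is in fact the standard one. Two small points worth tightening: (i) the base case of your induction is stated a bit loosely (there is no $a_0$); it suffices to take initial data $x\equiv 1$ on $[a_1-2,\,a_1-1]$ and observe that $p\equiv 0$ on $[a_1-1,\,a_1-\delta]$ keeps $x$ at the value $1$ up to the first bump, after which your inductive step applies verbatim. (ii) You should state explicitly that $x$ remains positive \emph{inside} each bump, not just at the endpoints: for $s\in[a_n-\delta,a_n]$ one has
\[
x(s)=y_{n-1}-y_{n-1}\int_{a_n-\delta}^{s}p(u)\,du\ge y_{n-1}(1-\alpha)>0,
\]
so positivity holds throughout. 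With these cosmetic fixes the argument is complete.
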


Also, in 2011, Braverman and Karpuz \cite{3} investigated equation
(1.2) in the case of a general argument ($\tau $ is not assumed
non-decreasing as in (2.1)) and proved that:

\begin{theo}[{\cite[Theorem 1]{3}}] There is
no constant $\mathit{K>0}$ such that%
\begin{equation}
\limsup_{t\rightarrow \infty }\int_{\tau (t)}^{t}p(s)ds>K
\end{equation}%
implies oscillation of equation $(1.2)$ for arbitrary
$($not necessarily non-decreasing$)$ argument  $\tau (t)\leq t.$
\end{theo}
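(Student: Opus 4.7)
The strategy is to construct an explicit counterexample of the strongest possible kind: a single equation of the form (1.2) which has a positive (hence non-oscillatory) solution yet for which $\limsup_{t\to\infty}\int_{\tau(t)}^{t}p(s)\,ds=+\infty$. Such an example simultaneously defeats every candidate constant $K>0$. The mechanism being exploited is that when $\tau$ is not required to be non-decreasing, its graph may plunge far below the diagonal on very short intervals without appreciably affecting the dynamics, while still inflating the length of the interval $[\tau(t),t]$ at isolated instants $t$.

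Concretely, I would fix a positive candidate $x(t)=e^{-t}$ and let (1.2) (with $m=1$) define $p$ in terms of $\tau$: the equation forces
\[
p(t)=e^{\tau(t)-t}\in(0,1],
\]
so that for \emph{any} continuous $\tau:[t_0,\infty)\to[0,\infty)$ with $\tau(t)\le t$ and $\tau(t)\to\infty$, the pair $(p,\tau)$ is admissible and $x$ is automatically a non-oscillatory solution, with $p\equiv 1$ wherever $\tau(t)=t$. Now build $\tau$ with a sequence of narrow but deep dips: choose $t_n\uparrow\infty$, $L_n\uparrow\infty$ and $\delta_n\downarrow 0$ with $\sum_{n}\delta_n<\infty$, and with the intervals $I_n:=[t_n-\delta_n,\,t_n+\delta_n]$ pairwise disjoint and contained in $(L_n,\infty)$. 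Set $\tau(t)=t$ off $\bigcup_n I_n$, and on each $I_n$ let $\tau$ be the piecewise-linear ``V'' with $\tau(t_n\pm\delta_n)=t_n\pm\delta_n$ and $\tau(t_n)=t_n-L_n$; a direct check gives $\tau(t)\le t$ throughout, and $\tau\ge 0$. Since $p(t)=1$ off the dips and $0<p\le 1$ on them,
\[
\int_{\tau(t_n)}^{t_n}p(s)\,ds=\int_{t_n-L_n}^{t_n}p(s)\,ds\ \ge\ L_n-2\sum_{k\ge 1}\delta_k\longrightarrow +\infty,
\]
proving the theorem.

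The only real difficulty is bookkeeping: the widths $\delta_n$ must be summable (so that the cumulative ``loss'' in the integral from crossing the dips is a bounded error) while the depths $L_n$ diverge, and the centres $t_n$ must be spaced so that the $I_n$ are disjoint and lie to the right of $L_n$. Any sufficiently fast choice such as $\delta_n=2^{-n}$, $L_n=n$, $t_{n+1}\ge t_n+L_{n+1}+1$ works. No subtler analytic obstacle arises, because the very definition $p=e^{\tau-t}$ turns $x(t)=e^{-t}$ into an exact solution of (1.2) \emph{regardless of how wild $\tau$ is}, so all the work is concentrated in engineering the non-monotonicity of $\tau$.
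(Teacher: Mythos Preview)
Your argument is correct. The construction $p(t)=e^{\tau(t)-t}$ forces $x(t)=e^{-t}$ to be an exact positive solution for any admissible $\tau$, and the ``deep, narrow dips'' make $\int_{\tau(t_n)}^{t_n}p(s)\,ds\to\infty$ while keeping $\tau(t)\to\infty$. One small point worth stating explicitly: the condition $\lim_{t\to\infty}\tau(t)=\infty$ requires $t_n-L_n\to\infty$, which your concrete recursion $t_{n+1}\ge t_n+L_{n+1}+1$ does guarantee (since then $t_{n+1}-L_{n+1}\ge t_n+1$), but the earlier abstract requirement ``$I_n\subset(L_n,\infty)$'' alone does not.

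As for comparison with the paper: this theorem is quoted from Braverman and Karpuz~\cite{3} and no proof is reproduced here, so there is nothing in the present paper to compare your argument against. The flavour of the original construction in~\cite{3} can be inferred from Example~4.2, where $\tau$ is built from periodic piecewise-linear dips below $t-1$; in that setting the $\limsup$ is finite and the example is tuned to defeat a specific $K$. Your version is in a sense sharper and more economical: by letting the dip depths $L_n\to\infty$ while keeping their widths summable, a single equation defeats all $K$ at once, and the choice $p=e^{\tau-t}$ removes any need to verify non-oscillation by hand.
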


\begin{rem}
Observe that, because of the condition $%
(2.3),$ the constant $K$ in $(2.4)$ makes sense for $K$ $>1/e.$
\end{rem}

Moreover in \cite{3} the above condition (2.1) was improved as
follows.

\begin{theo} [{\cite[Corollary 1]{3}}] Assume that%
\begin{equation}
B:=\limsup_{t\rightarrow \infty }\int_{\sigma (t)}^{t}p(s)\exp \left\{
\int_{\tau (s)}^{\sigma (t)}p(\xi )d\xi \right\} ds>1,
\end{equation}%
where  $\sigma (t)=\sup_{s\leq t}\tau (s)$, $t\geq t_{0.}$
Then all solutions of the equation $(1.2)$ oscillate.
\end{theo}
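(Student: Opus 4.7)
The plan is a standard contradiction argument built around an iterative lower bound for solutions. Suppose equation $(1.2)$ admits an eventually positive solution $x$ (the negative case is symmetric). From $x^{\prime}(t)=-p(t)x(\tau(t))\le 0$ we see that $x$ is eventually non-increasing. Note also that $\sigma(t)=\sup_{s\le t}\tau(s)$ is non-decreasing, satisfies $\sigma(t)\le t$, $\sigma(t)\ge\tau(t)$ and $\sigma(t)\to\infty$; in particular, for every $s\le t$ we have $\tau(s)\le\sigma(s)\le\sigma(t)$.

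The key step is to upgrade the trivial monotonicity bound $x(\tau(s))\ge x(\sigma(t))$ to an exponential one. Since $\tau(t)\le t$ and $x$ is non-increasing, dividing $(1.2)$ by $x(t)$ gives $(\ln x)^{\prime}(t)\le -p(t)$. Integrating this differential inequality from $\tau(s)$ to $\sigma(t)$ (which is a legitimate interval by the previous paragraph) yields
\begin{equation*}
x(\tau(s))\;\ge\; x(\sigma(t))\,\exp\!\left\{\int_{\tau(s)}^{\sigma(t)}p(\xi)\,d\xi\right\}.
\end{equation*}

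Next, I would integrate $(1.2)$ over $[\sigma(t),t]$ to obtain
\begin{equation*}
x(\sigma(t))-x(t)\;=\;\int_{\sigma(t)}^{t}p(s)\,x(\tau(s))\,ds,
\end{equation*}
and plug in the exponential lower bound for $x(\tau(s))$. This produces
\begin{equation*}
x(\sigma(t))-x(t)\;\ge\; x(\sigma(t))\int_{\sigma(t)}^{t}p(s)\exp\!\left\{\int_{\tau(s)}^{\sigma(t)}p(\xi)\,d\xi\right\}ds,
\end{equation*}
which rearranges to $x(\sigma(t))\bigl(1-B(t)\bigr)\ge x(t)>0$, where $B(t)$ denotes the integral in $(2.5)$. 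Choosing a sequence $t_n\to\infty$ along which $B(t_n)\to B>1$, the left-hand side becomes eventually negative while the right-hand side stays positive, a contradiction.

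The main technical obstacle is verifying cleanly that $\tau(s)\le\sigma(t)$ on the interval $s\in[\sigma(t),t]$ so that the inner integral in the exponential is meaningful with the correct sign, and justifying the comparison $(\ln x)^{\prime}\le -p$ only after passing to a sufficiently large $t_0$ where $x>0$ and the retarded values $x(\tau(s))$ are also positive; once these bookkeeping points are handled, the rest is algebraic manipulation of the integrated equation.
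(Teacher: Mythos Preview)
Your argument is correct. Note, however, that the paper does not itself prove this statement: Theorem~2.3 is quoted from \cite{3} without proof. Your approach nonetheless coincides with the technique the paper deploys for its own Theorem~3.1 (which, for $m=1$, strengthens condition~(2.5) to condition~(3.14)): there too one divides by $x$, integrates to obtain the representation~(3.3), derives the exponential lower bound $x(\tau_i(s))\ge x(\sigma_i(t))\exp\{\cdots\}$ as in~(3.6), and then integrates the equation over $[\sigma_j(t),t]$ to reach a contradiction. The only difference is that the paper iterates the ratio estimate once more (your bound $(\ln x)'\le -p$ is exactly~(3.5) with $m=1$, and the paper feeds this back into~(3.3) to get a sharper exponent), which is why~(3.14) has a nested exponential that~(2.5) lacks; stopping after one step, as you do, recovers precisely the Braverman--Karpuz condition.
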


For equations with several arguments the following results have
been established.

\vskip0.1cm In 1982, Ladas and Stavroulakis \cite{22}, (see also in 1984, Arino,
Gyori and Jawhari [1]), studied the equation with several constant retarded
arguments of the form%
\begin{equation*}
x^{\prime }(t)+\sum_{i=1}^{m}p_{i}(t)x(t-\tau _{i})=0\text{ , \ }t\geq t_{0},%
\eqno(1.1^{\prime })
\end{equation*}%
under the assumption that $\liminf_{t\rightarrow \infty }
\int_{t-\tau_{i}/2}^{t}p(s)ds>0,$ $i=1,2,\dots,m,$ and proved that each one
of the following conditions
\begin{gather}
\liminf_{t\rightarrow \infty }\int_{t-\tau _{i}}^{t}p_{i}(s)ds>\mathrm{\frac{%
1}{e}}\ \ \text{for some} \ \ i,\ \ i=1,2,\dots,m,\\
\liminf_{t\rightarrow \infty }\int_{t-\tau
}^{t}\sum\limits_{i=1}^{m}p_{i}(s)ds>\mathrm{\frac{1}{e},}\ \ \text{where} \
\ \tau =\min \{\tau _{1,}\tau _{2,},\dots,\tau _{m}\},\\
\left[ \prod\limits_{i=1}^{m}\bigg( \sum\limits_{j=1}^{m}\liminf_{t%
\rightarrow \infty }\int_{t-\tau _{j}}^{t}p_{i}(s)ds\bigg) \right] ^{1/m}>%
\mathrm{\frac{1}{e}}\text{,}
\end{gather}%
or%
\begin{align}
\frac{1}{m}&\sum\limits_{i=1}^{m}\left( \liminf_{t\rightarrow \infty
}\int_{t-\tau _{i}}^{t}p_{i}(s)ds\right) +  \notag \\
&+\frac{2}{m}\sum\limits_{\substack{ i<j  \\ i,j=1}}^{m}\left[ \bigg( %
\liminf_{t\rightarrow \infty }\int_{t-\tau _{j}}^{t}p_{i}(s)ds\bigg) \bigg( %
\liminf_{t\rightarrow \infty }\int_{t-\tau _{i}}^{t}p_{i}(s)ds\bigg) \right]
^{\frac{1}{2}}>\mathrm{\frac{1}{e}}
\end{align}%
implies that all solutions of equation $(1.1^{\prime })$ oscillate. Later in
1996, Li \cite{24} proved that the same conclusion holds if
\begin{equation}
\liminf_{t\rightarrow \infty }\sum_{i=1}^{m}\int_{t-\tau _{i}}^{t}p_{i}(s)ds>%
\frac{1}{e}\text{.}
\end{equation}

\vskip0.1cm In 1984, Hunt and Yorke \cite{13}, considered the equation with
variable coefficients of the form:
\begin{equation*}
x^{\prime }(t)+\sum_{i=1}^{m}p_{i}(t)x(t-\tau _{i}(t))=0\text{ , \ }t\geq
t_{0},\eqno(1.1^{\prime \prime })
\end{equation*}%
under the assumption that there is a uniform upper bound $\tau _{0}$ on the $%
\tau _{i}$'$s$ and proved that if
\begin{equation*}
\liminf_{t\rightarrow \infty }\sum_{i=1}^{m}\tau _{i}(t)p_{i}(t)>\mathrm{%
\frac{1}{e}}
\end{equation*}%
Then all solutions of the equation $(1.1^{\prime \prime })$ oscillate.

\vskip0.1cm In 1984, Fukagai and Kusano \cite{8}, for the equation (1.1)
established the following theorem.

\begin{theo}[{\cite[Theorem 1$^{\prime }$ (i)]{8}}]
Consider equation $(1.1)$ and assume that there is a
continuous non-decreasing function $\tau ^{\ast }(t)$ such that
$\tau _{i}(t)\leq \tau ^{\ast }(t)\leq t$  for $t\geq t_{0}$,
 $1\leq i\leq m$. If%
\begin{equation}
\liminf_{t\rightarrow \infty }\int_{\tau ^{\ast
}(t)}^{t}\sum_{i=1}^{m}p_{i}(s)ds>\frac{1}{e}\text{,}
\end{equation}%
Then all solutions of the equation $(1.1)$ oscillate. If, on
the other hand, there exists a continuous non-decreasing  function
$\tau_{\ast }(t)$ such that $\tau_{\ast }(t)\leq \tau _{i}(t)$ %
for $t\geq t_{0}$, $1\leq i\leq m$, \ $\lim_{t\rightarrow
\infty }\tau _{\ast }(t)=\infty $ and %
\begin{equation*}
\int_{\tau _{\ast }(t)}^{t}\sum_{i=1}^{m}p_{i}(s)ds\leq \frac{1}{e}
\end{equation*}
for all sufficiently large $t$,
then the equation $(1.1)$ has a non-oscillatory solution.%
\end{theo}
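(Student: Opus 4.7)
For the oscillation half I would argue by contradiction. Suppose (1.1) admits an eventually positive solution $x$ (the negative case is symmetric). Since $p_i\ge 0$ and $x(\tau_i(t))>0$, equation (1.1) forces $x'(t)\le 0$ for all large $t$, so $x$ is eventually non-increasing. Setting $P(t):=\sum_{i=1}^{m}p_i(t)$ and using $\tau_i(t)\le \tau^{\ast}(t)\le t$ together with the monotonicity of $x$ gives $x(\tau_i(t))\ge x(\tau^{\ast}(t))$, and therefore
\begin{equation*}
x'(t)+P(t)\,x(\tau^{\ast}(t))\;\le\;x'(t)+\sum_{i=1}^{m} p_i(t)\,x(\tau_i(t))\;=\;0 .
\end{equation*}
Hence $x$ is a positive solution of the single-argument retarded differential inequality $y'(t)+P(t)\,y(\tau^{\ast}(t))\le 0$ with the non-decreasing argument $\tau^{\ast}$. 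Under the hypothesis (2.10) the inequality version of the Koplatadze--Canturija criterion (cf.\ (2.2)) rules out any such positive solution, giving the desired contradiction.

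For the existence half I would build a non-oscillatory solution of (1.1) from one of the auxiliary scalar equation
\begin{equation*}
y'(t)+P(t)\,y(\tau_{\ast}(t))=0 .
\end{equation*}
The hypothesis $\int_{\tau_{\ast}(t)}^{t} P(s)\,ds\le 1/e$ for all sufficiently large $t$, with $\tau_{\ast}$ continuous, non-decreasing, and tending to $+\infty$, is exactly the classical non-oscillation condition for this scalar equation and furnishes an eventually positive solution $y$. Such a $y$ is necessarily non-increasing, and $\tau_{\ast}(t)\le \tau_i(t)$ yields $y(\tau_{\ast}(t))\ge y(\tau_i(t))$, whence
\begin{equation*}
y'(t)+\sum_{i=1}^{m} p_i(t)\,y(\tau_i(t))\;\le\;y'(t)+P(t)\,y(\tau_{\ast}(t))\;=\;0 .
\end{equation*}
Thus $y$ is a positive supersolution of (1.1). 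A standard monotone-iteration/Schauder fixed-point argument applied to the integral operator
\begin{equation*}
(\mathcal{T} z)(t)=y(T)-\int_{T}^{t}\sum_{i=1}^{m}p_i(s)\,z(\tau_i(s))\,ds
\end{equation*}
on the order interval $\{z\in C:\,0\le z\le y\}$ then produces an eventually positive solution of (1.1), establishing non-oscillation.

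The delicate step in both halves is the reduction to a single-argument problem via the monotone envelopes $\tau^{\ast}$ and $\tau_{\ast}$: it is precisely their monotonicity together with the monotonicity of the candidate (sub/super)solution that allows one to chain $x(\tau_i(t))\ge x(\tau^{\ast}(t))$ and $y(\tau_{\ast}(t))\ge y(\tau_i(t))$ and so invoke the one-argument Koplatadze--Canturija theory. The supersolution-to-solution passage in the converse is folklore for linear delay equations but must be checked so that the iterates remain in $[0,y]$ and converge monotonically, which here follows from the non-negativity of the $p_i$ and the supersolution property of $y$.
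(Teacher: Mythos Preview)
The paper does not supply a proof of this statement: Theorem~2.4 is quoted from Fukagai and Kusano \cite{8} as part of the survey in Section~2, so there is no in-paper argument to compare against. That said, your outline is correct and is essentially the standard route to this result (and the one taken in \cite{8}): reduce the $m$-argument problem to a single-argument one by sandwiching with the monotone envelopes $\tau^{\ast}$ and $\tau_{\ast}$, then invoke the one-delay oscillation/nonoscillation theory of Koplatadze--Chanturija type. The two halves you sketch---the differential-inequality comparison for oscillation and the supersolution/monotone-iteration construction for nonoscillation---are exactly the expected ingredients, and the places you flag as delicate (monotonicity of $x$ and of the envelopes, and the passage from positive supersolution to positive solution) are the right ones to justify carefully.
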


\vskip0.1cm In 2000, Grammatikopoulos, Koplatadze and Stavroulakis \cite{10}
improved the above results as follows:

\begin{theo}[{\cite[Theorems 2.6]{10}}] Assume that the
functions $\tau _{i}$  are non-decreasing for all $i\in \left\{
1,\ldots ,m\right\} $,%
\begin{equation*}
\int_{0}^{\infty }\left\vert p_{i}(t)-p_{j}(t)\right\vert dt<+\infty ,\text{
\ \ \ }i,j=1,\dots,m
\end{equation*}%
and%
\begin{equation*}
\liminf_{t\rightarrow \infty }\int_{\tau _{i}(t)}^{t}p_{i}(s)ds=\beta
_{i}>0,\ \text{\ \ }i=1,\dots,m\text{.}
\end{equation*}%
If%
\begin{equation}
\sum_{i=1}^{m}\bigg( \liminf_{t\rightarrow \infty }\int_{\tau
_{i}(t)}^{t}p_{i}(s)ds\bigg) >\frac{1}{e},
\end{equation}%
Then all solutions of the equation $(1.1)$ oscillate.
\end{theo}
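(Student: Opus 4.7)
The plan is to argue by contradiction: assume equation $(1.1)$ admits a solution $x$ with $x(t)>0$ for all large $t$. Since $p_i\ge 0$ and $\tau_i(t)\to\infty$, equation $(1.1)$ forces $x'(t)\le 0$, so $x$ is eventually non-increasing and the ratios $w_i(t):=x(\tau_i(t))/x(t)\ge 1$ are well defined for large $t$. The strategy is to convert $(1.1)$ into a scalar relation of single-delay type to which the Koplatadze--Canturija threshold $1/e$ in $(2.2)$ can be applied, using the $L^1$-closeness of the coefficients to justify the reduction.

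The first main step would be to show that, under the assumptions $\int^{\infty}|p_i-p_j|\,dt<\infty$ and $\beta_i>0$, the ratios $w_i$ share a common asymptotic behavior: there exists $L\ge 1$ with $\liminf_{t\to\infty}w_i(t)=L$ for every $i=1,\dots,m$. Dividing $(1.1)$ by $x(t)$ and integrating from $\tau_j(t)$ to $t$ yields
\[
\ln w_j(t)=\int_{\tau_j(t)}^{t}\sum_{i=1}^{m}p_i(s)\,w_i(s)\,ds,\qquad j=1,\dots,m.
\]
Writing $p_i=p_j+(p_i-p_j)$ splits the integrand into a principal part with coefficient $p_j$ and an $L^1$ perturbation whose contribution over $[\tau_j(t),t]$ tends to $0$; since each $\tau_j$ is non-decreasing with $\liminf_{t\to\infty}\int_{\tau_j(t)}^{t}p_j(s)\,ds>0$, cross-comparing the resulting identities for different $j$ should pin down a common $L$.

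The second main step is to take $\liminf_{t\to\infty}$ in the above identity, again absorbing the $L^1$ errors. Using $w_i(s)\ge 1$ together with the subadditivity of $\liminf$ one obtains
\[
\ln L\;\ge\;L\sum_{i=1}^{m}\liminf_{t\to\infty}\int_{\tau_i(t)}^{t}p_i(s)\,ds\;=\;L\sum_{i=1}^{m}\beta_i.
\]
Since $(\ln L)/L\le 1/e$ for every $L\ge 1$, this forces $\sum_{i=1}^{m}\beta_i\le 1/e$, contradicting $(2.12)$.

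The principal obstacle lies in the first step: the arguments $\tau_i$ are in general pairwise incomparable (only each individually non-decreasing), so the reduction to a single limit $L$ governing all the $w_i$ is exactly what the hypothesis $\int|p_i-p_j|\,dt<\infty$ is there to enable. Propagating the $L^1$ proximity of the coefficients into an asymptotic proximity of the ratios $w_i$ will be where the bulk of the technical work sits; once that is in place, the remainder is a multi-delay analogue of the standard Koplatadze--Canturija single-delay argument behind $(2.2)$.
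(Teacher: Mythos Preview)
The paper does not prove this statement: Theorem~2.5 is quoted from \cite{10} as part of the literature survey in Section~2, and no argument for it appears anywhere in the present paper, so there is nothing here against which to compare your proposal.

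On the sketch itself, the overall architecture---assume an eventually positive solution, divide $(1.1)$ by $x(t)$, integrate over $[\tau_j(t),t]$, and confront the outcome with $(\ln L)/L\le 1/e$---is the natural multi-delay analogue of the single-delay Koplatadze--Canturija argument, and you are right that forcing the $m$ ratios $w_i$ to share a common $\liminf$ via the $L^1$-closeness hypothesis is where the real work lies. There is, however, a concrete gap in your second step. The identity you display,
\[
\ln w_j(t)=\int_{\tau_j(t)}^{t}\sum_{i=1}^{m}p_i(s)\,w_i(s)\,ds,
\]
has lower limit $\tau_j(t)$, whereas $\beta_i=\liminf_{t\to\infty}\int_{\tau_i(t)}^{t}p_i(s)\,ds$ is defined with $\tau_i(t)$; ``subadditivity of $\liminf$'' alone cannot convert the former into $\sum_i\beta_i$. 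You must invoke the $L^1$ hypothesis again here: writing $\sum_i p_i = m\,p_j + \sum_i(p_i-p_j)$ turns the integral into $m\int_{\tau_j(t)}^{t}p_j\,w\,ds$ plus a term that is $o(1)$ provided the $w_i$ stay bounded (a point that itself needs justification), giving $\ln L\ge L\,m\,\beta_j$ for every $j$; choosing $j$ with $\beta_j$ maximal then yields $\ln L\ge L\sum_i\beta_i$. Note also that ``using $w_i(s)\ge 1$'' would only produce $\ln L\ge\sum_i\beta_i$, without the essential factor $L$; you need $w_i(s)\ge L-\varepsilon$ eventually, which is precisely what your first step is supposed to deliver.
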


\section{New Oscillation Results}

Observe that all the above mentioned oscillation conditions
(2.6)--(2.12) involve $\liminf $ only and coincide with the condition (2.2)
in the case that $m=1.$ It is obvious that there is a gap between the
conditions $(2.1)$ and $(2.2)$ when the limit $\lim_{t{\rightarrow }{\infty }%
}\int_{\tau (t)}^{t}p(s)ds$ does not exist. Moreover, in view of Theorem
2.2, it is an interesting problem to investigate equation (1.1) with
non-monotone arguments and derive sufficient oscillation conditions,
involving $\limsup $ (as the condition (2.1) for the equation (1.2) with one
argument), which is the main objective in the following.

\begin{theo}
Assume that there exist non-decreasing functions $\sigma_{i}\in C([t_{0},+\infty))$
such that
\begin{equation}
\tau_{i}(t)\leq \sigma_{i}(t)\leq t\quad (i=1,\dots,m),
\end{equation}%
and
\begin{align}
\limsup_{t\rightarrow +\infty }& \prod_{j=1}^{m}\Bigg[\prod_{i=1}^{m}\int_{%
\sigma _{j}(t)}^{t}p_{i}(s)\exp \bigg(\int_{\tau _{i}(s)}^{\sigma
_{i}(t)}\sum_{i=1}^{m}p_{i}(\xi )\times  \notag \\
& \times \exp \bigg(\int_{\tau _{i}(\xi )}^{\xi }\sum_{i=1}^{m}p_{i}(u)du%
\bigg)d\xi \bigg)ds\Bigg]^{\frac{1}{m}}>\frac{1}{m^{m}}.
\end{align}%
Then all solutions of the equation $(1.1)$ oscillate.
\end{theo}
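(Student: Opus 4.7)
The plan is to argue by contradiction. Assume $(1.1)$ admits a nonoscillatory solution, which may be taken eventually positive, $x(t)>0$ for $t$ large. Equation $(1.1)$ then forces $x'(t)\le 0$, so $x$ is eventually non-increasing. The monotonicity gives $x(\tau_k(\xi))\ge x(\xi)$ for each $k$ (since $\tau_k(\xi)\le \xi$), whence $(1.1)$ yields the first differential inequality $x'(\xi)+\bigl(\sum_{k=1}^{m}p_k(\xi)\bigr)x(\xi)\le 0$. Multiplying by the appropriate integrating factor shows that $\xi\mapsto x(\xi)\exp\!\bigl(\int^{\xi}\sum_k p_k\bigr)$ is non-increasing, which upgrades the pointwise bound to
\begin{equation*}
x(\tau_k(\xi))\ge x(\xi)\exp\!\left(\int_{\tau_k(\xi)}^{\xi}\sum_{l=1}^{m}p_l(u)\,du\right).
\end{equation*}
Feeding this back into $(1.1)$ produces the sharpened inequality $x'(\xi)+\sum_{k}p_k(\xi)x(\xi)\exp(\int_{\tau_k(\xi)}^{\xi}\sum_l p_l)\le 0$, and one more integration together with $\tau_i(s)\le \sigma_i(s)\le \sigma_i(t)$ (which is guaranteed by $(3.1)$ and the monotonicity of $\sigma_i$) yields, for all $s\le t$,
\begin{equation*}
x(\tau_i(s))\ge x(\sigma_i(t))\exp\!\left(\int_{\tau_i(s)}^{\sigma_i(t)}\sum_{k=1}^{m}p_k(\xi)\exp\!\left(\int_{\tau_k(\xi)}^{\xi}\sum_{l=1}^{m}p_l(u)\,du\right)d\xi\right).
\end{equation*}
This is precisely the nested exponential weight appearing inside $(3.2)$ (reading the repeated dummy indices in the statement as the independent running variables $k$ and $l$).

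Next, for each fixed $j\in\{1,\dots,m\}$, integrate $(1.1)$ from $\sigma_j(t)$ to $t$, drop the nonnegative term $x(t)$, and insert the lower bound above. Setting
\begin{equation*}
A_{ij}(t):=\int_{\sigma_j(t)}^{t}p_i(s)\exp\!\left(\int_{\tau_i(s)}^{\sigma_i(t)}\sum_{k=1}^{m}p_k(\xi)\exp\!\Bigl(\int_{\tau_k(\xi)}^{\xi}\sum_{l=1}^{m}p_l(u)du\Bigr)d\xi\right)ds,
\end{equation*}
one obtains the $m$ inequalities $x(\sigma_j(t))\ge \sum_{i=1}^{m} A_{ij}(t)\,x(\sigma_i(t))$, $j=1,\dots,m$.

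Finally, I would apply the AM--GM inequality $\sum_{i=1}^{m}a_i\ge m\bigl(\prod_{i=1}^{m}a_i\bigr)^{1/m}$ to each of these to get $x(\sigma_j(t))\ge m\bigl(\prod_{i}x(\sigma_i(t))\bigr)^{1/m}\bigl(\prod_{i}A_{ij}(t)\bigr)^{1/m}$. Taking the product over $j=1,\dots,m$, the $x$-factor on the right becomes $\prod_i x(\sigma_i(t))=\prod_j x(\sigma_j(t))>0$ (by positivity), so it cancels against the left-hand side, leaving
\begin{equation*}
\prod_{j=1}^{m}\Bigl(\prod_{i=1}^{m}A_{ij}(t)\Bigr)^{1/m}\le \frac{1}{m^{m}}.
\end{equation*}
Passing to $\limsup$ as $t\to\infty$ directly contradicts hypothesis $(3.2)$, completing the proof.

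The main obstacle I expect is the first step: deriving the double-exponential lower bound for $x(\tau_i(s))/x(\sigma_i(t))$ with the correct ranges of integration and indices, and verifying that $x$ remains positive and non-increasing on the entire span from $\min_i \tau_i(\sigma_j(t))$ up to $t$ for $t$ sufficiently large. Once this bootstrap is established, the combinatorial AM--GM manipulation that converts $m$ sum-inequalities into a single product-of-geometric-means inequality with the sharp threshold $1/m^{m}$ is essentially algebraic, and is the mechanism by which the factor $m^{m}$ in $(3.2)$ arises.
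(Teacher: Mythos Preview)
Your proposal is correct and follows essentially the same route as the paper's own proof: the paper derives the identity $x(s)=x(t)\exp\bigl(\int_{s}^{t}\sum_{i}p_{i}(\xi)\,x(\tau_{i}(\xi))/x(\xi)\,d\xi\bigr)$, bootstraps once using $x(\tau_{i}(\xi))/x(\xi)\ge 1$ to obtain your first exponential bound, then substitutes back to obtain the nested bound (3.6), integrates (1.1) over $[\sigma_{j}(t),t]$, applies AM--GM, and takes the product over $j$ to cancel $\prod_{j}x(\sigma_{j}(t))$---exactly your steps, with the only cosmetic difference being that you phrase the bootstrap via integrating factors rather than via the integral identity. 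Your reading of the repeated dummy index $i$ in (3.2) as independent running indices is also the intended one.
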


\begin{proof} Assume, for the sake of contradiction, that the
retarded equation $(1.1)$ admits a non-oscillatory solution $x(t).$ Since $%
-x(t)$ is also a solution to $(1.1)$, we can confine ourselves only to the
case that $x(t)$ is an eventually positive solution of the equation $(1.1).$
Then there exists $t_{1}>t_{0}$ such  that $x(t)$, $x(\tau _{i}(t)),$\ $%
x(\sigma _{i}(t))>0$ $(i=1,2\dots,m)$ for all $t\geq t_{1}.$ Therefore,
from equation $(1.1)$ it follows that $x^{\prime }(t)\leq 0$ for all $t\geq
t_{1}$ and consequently $x(t)$\ is non-increasing. From $(1.1)$ dividing by $%
x(t)$ and integrating from $s$ to $t$ for sufficiently large $t$ and $s,$ we
have
\begin{equation}
x(s)=x(t)\exp \bigg\{\int_{s}^{t}\sum_{i=1}^{m}p_{i}(\xi )\frac{x(\tau
_{i}(\xi ))}{x(\xi )}\,d\xi \bigg\}\quad \text{for}\quad t\geq s.
\end{equation}%
Integrating (1.1) from $\sigma _{j}(t)$ to $t$, for sufficiently large $t,$
we have%
\begin{equation}
x(\sigma _{j}(t))\geq \sum_{i=1}^{m}\int_{\sigma _{j}(t)}^{t}x(\tau
_{i}(s))\,p_{i}(s)\,ds.
\end{equation}%
On the other hand from (3.3), taking into account (3.1) and the fact that
the function $x(t)$\ is non-increasing, for sufficiently large $t,$ we obtain%
\begin{equation}
\frac{x(\tau _{i}(t))}{x(t)}\!=\!\exp \bigg\{\int_{\tau
_{i}(t)}^{t}\sum_{i=1}^{m}p_{i}(\xi )\frac{x(\tau _{i}(\xi ))}{x(\xi )}d\xi %
\bigg\}\geq \exp \bigg\{\int_{\tau _{i}(t)}^{t}\sum_{i=1}^{m}p_{i}(\xi )d\xi %
\bigg\}
\end{equation}%
and
\begin{equation}
x(\tau _{i}(s))\!\geq \!x(\sigma _{i}(t))\exp \Bigg\{\!\int_{\tau
_{i}(s)}^{\sigma _{i}(t)}\sum_{i=1}^{m}p_{i}(\xi )\exp \bigg(\!\int_{\tau
_{i}(\xi )}^{\xi }\!\sum_{i=1}^{m}p_{i}(u)\,du\!\bigg)d\xi \Bigg\}.
\end{equation}%
Combining the last three inequalities (3.4), (3.5) and (3.6), and using
the arithmetic mean-geometric mean inequality, we get%
\begin{align*}
x(\sigma _{j}(t))& \geq \sum_{i=1}^{m}\int_{\sigma _{j}(t)}^{t}x(\sigma
_{i}(t))\times \\
& \times \exp \Bigg\{\int_{\tau _{i}(s)}^{\sigma
_{i}(t)}\sum_{i=1}^{m}p_{i}(\xi )\exp \bigg(\int_{\tau _{i}(\xi )}^{\xi}
\sum_{i=1}^{m}p_{i}(u)du\bigg)d\xi \Bigg\}p_{i}(s)\,ds= \\
&= \sum_{i=1}^{m}x(\sigma_{i}(t))\times \\
&\times\int_{\sigma _{j}(t)}^{t} \exp \Bigg\{\int_{\tau _{i}(s)}^{\sigma
_{i}(t)}\sum_{i=1}^{m}p_{i}(\xi )\exp \bigg(\int_{\tau _{i}(\xi )}^{\xi
}\sum_{i=1}^{m}p_{i}(u)du\bigg)d\xi \Bigg\}p_{i}(s)\,ds\geq \\
& \geq m\bigg[\prod_{i=1}^{m}x(\sigma _{i}(t))\bigg]^{\frac{1}{m}}\times %
\Bigg[\prod_{i=1}^{m}\int_{\sigma _{j}(t)}^{t}p_{i}(s)\times \\
& \times \exp \bigg\{\int_{\tau _{i}(s)}^{\sigma
_{i}(t)}\sum_{i=1}^{m}p_{i}(\xi )\exp \bigg(\int_{\tau _{i}(\xi )}^{\xi
}\sum_{i=1}^{m}p_{i}(u)\,du\bigg)d\xi \bigg\}ds\Bigg]^{\frac{1}{m}},\text{ }%
j=1,\dots ,m,
\end{align*}%
and taking the product on both sides of the last inequality, we find
\begin{align*}
w(t)& \geq m^{m}w(t)\prod_{j=1}^{m}\Bigg[\prod_{i=1}^{m}\int_{\sigma
_{j}(t)}^{t}\exp \bigg\{\int_{\tau _{i}(s)}^{\sigma
_{i}(t)}\sum_{i=1}^{m}p_{i}(\xi )\times \\
& \times \exp \bigg(\int_{\tau _{i}(\xi )}^{\xi }\sum_{i=1}^{m}p_{i}(u)\,du%
\bigg)d\xi \bigg\}ds\Bigg]^{\frac{1}{m}},
\end{align*}%
where $w(t)=\prod\limits_{j=1}^{m}x(\sigma _{j}(t))$. That is,
\begin{align*}
\limsup_{t\rightarrow +\infty }& \prod_{j=1}^{m}\Bigg[\prod_{i=1}^{m}\int_{%
\sigma _{j}(t)}^{t}p_{i}(s)\exp \bigg(\int_{\tau _{i}(s)}^{\sigma
_{i}(t)}\sum_{i=1}^{m}p_{i}(\xi )\times \\
& \times \exp \bigg(\int_{\tau _{i}(\xi )}^{\xi }\sum_{i=1}^{m}p_{i}(u)du%
\bigg)d\xi \bigg)ds\Bigg]^{\frac{1}{m}}\leq \frac{1}{m^{m}}\,,
\end{align*}%
which contradicts (3.2). The proof of the theorem is complete.
\end{proof}

For the next theorem we need the following lemma.

\begin{lem}
Let
\begin{equation}
\liminf_{t\rightarrow +\infty }\int_{\tau
_{i}(t)}^{t}p_{i}(s)\,ds=p_{i}>0\quad (i=1,\dots ,m).
\end{equation}%
and $x(t)$ be an eventually positive solution of equation $(1.1)$. Then
\begin{equation}
\liminf_{t\rightarrow +\infty }\frac{x(\tau _{i}(t))}{x(t)}\geq \lambda_{i}^{\ast },
\end{equation}%
where $\lambda _{i}^{\ast }$ is the smallest root of the equation%
\begin{equation}
e^{p_{i}\lambda }=\lambda .  \tag{$3.8'$}
\end{equation}
\end{lem}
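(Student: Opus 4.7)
The plan is to exploit the standard integral representation of $x$ (already derived as (3.3) in the proof of Theorem 3.1) together with the inequality $\lambda \ge e^{p_i\lambda}$ that arises by a bootstrap/liminf argument. Throughout the argument $x$ is eventually positive, so from (1.1) we get $x'(t)\le 0$, i.e. $x$ is eventually nonincreasing; in particular $x(\tau_j(\xi))/x(\xi)\ge 1$ whenever $\tau_j(\xi)\le \xi$, and the ratio $x(\tau_i(t))/x(t)$ is well defined and $\ge 1$.

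Setting $s=\tau_i(t)$ in (3.3), I would write
\[
\frac{x(\tau_i(t))}{x(t)}=\exp\Bigg\{\int_{\tau_i(t)}^{t}\sum_{j=1}^{m}p_j(\xi)\,\frac{x(\tau_j(\xi))}{x(\xi)}\,d\xi\Bigg\}.
\]
Since every term in the sum is nonnegative, I can drop all summands except $j=i$ to obtain
\[
\frac{x(\tau_i(t))}{x(t)}\ge \exp\Bigg\{\int_{\tau_i(t)}^{t}p_i(\xi)\,\frac{x(\tau_i(\xi))}{x(\xi)}\,d\xi\Bigg\}.
\]
Denote $\mu_i:=\liminf_{t\to+\infty}x(\tau_i(t))/x(t)\in[1,+\infty]$. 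Fix $\varepsilon\in(0,\mu_i)$; then there exists $T$ such that $x(\tau_i(\xi))/x(\xi)>\mu_i-\varepsilon$ for all $\xi\ge T$, and for $t$ so large that $\tau_i(t)\ge T$ the previous inequality yields
\[
\frac{x(\tau_i(t))}{x(t)}\ge \exp\Big\{(\mu_i-\varepsilon)\int_{\tau_i(t)}^{t}p_i(\xi)\,d\xi\Big\}.
\]

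Taking the liminf as $t\to+\infty$ and using hypothesis (3.7), I get $\mu_i\ge \exp\{(\mu_i-\varepsilon)p_i\}$; since $\varepsilon>0$ is arbitrary, letting $\varepsilon\to 0^{+}$ produces
\[
\mu_i\ge e^{p_i\mu_i}.
\]
Note first that the very existence of an eventually positive solution forces $p_i\le 1/e$, for otherwise the function $\lambda\mapsto e^{p_i\lambda}-\lambda$ is strictly positive on $(0,+\infty)$, contradicting the inequality above (and also the scalar case of Theorem 2.2 already rules this out). Hence (3.8$'$) has (positive) real roots; let $\lambda_i^{*}\le \lambda_i^{**}$ be the two of them. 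The sign analysis of $f(\lambda):=\lambda-e^{p_i\lambda}$ shows $f\ge 0$ precisely on $[\lambda_i^{*},\lambda_i^{**}]$, so the inequality $\mu_i\ge e^{p_i\mu_i}$ immediately gives $\mu_i\ge \lambda_i^{*}$, which is (3.8).

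The main obstacle, which is minor, is simply the careful bookkeeping in passing to the liminf: one must verify that $\mu_i$ is finite (otherwise the conclusion is trivial), and one must make the $(\mu_i-\varepsilon)$-replacement uniform on the shrinking interval $[\tau_i(t),t]$. Both are handled by the eventual lower bound $x(\tau_i(\xi))/x(\xi)\ge \mu_i-\varepsilon$ valid for all $\xi\ge T$, which is a one-sided ($\liminf$) statement and therefore fits the standard Koplatadze--Canturija/Erbe--Zhang template used for the single-delay case.
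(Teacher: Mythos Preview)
Your proof is correct and follows essentially the same route as the paper: use the integral representation (3.3), drop all summands except the $i$-th, bootstrap the $\liminf$ of $x(\tau_i(t))/x(t)$ against the integral hypothesis (3.7), and compare the resulting inequality $\mu_i\ge e^{p_i\mu_i}$ with the characteristic equation $(3.8')$. The only cosmetic differences are that the paper places an $\varepsilon$ on the integral bound (working with an auxiliary root $\lambda_i^{\ast}(\varepsilon)$ and arguing by contradiction) while you place $\varepsilon$ on the ratio and pass to the liminf directly; your concluding sign analysis of $\lambda\mapsto \lambda-e^{p_i\lambda}$ is a slightly cleaner replacement for the paper's contradiction step.
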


\begin{proof}
Let $\varepsilon \in (0,p_{i})$. First we show that
\begin{equation}
\liminf_{t\rightarrow +\infty }\frac{x(\tau _{i}(t))}{x(t)}\geq \lambda
_{i}^{\ast }(\varepsilon ),
\end{equation}%
where $\lambda _{i}^{\ast }(\varepsilon )$ is the smallest root of the
equation%
\begin{equation*}
e^{(p_{i}-\varepsilon )\lambda }=\lambda .
\end{equation*}%
By (3.7), for any $\varepsilon >0$, there exist $t_{\varepsilon }\in R_{+}$
such that
\begin{equation}
\int_{\tau _{i}(t)}^{t}p_{i}(s)\,ds\geq p_{i}-\varepsilon \quad \text{for}%
\quad t\geq t_{\varepsilon }.
\end{equation}%
Assume, for the sake of contradiction, that (3.9) is not correct. Then,
there exists $\varepsilon _{0}>0$ such that%
\begin{equation}
\frac{e^{(p_{i}-\varepsilon )\gamma _{i}}}{\gamma _{i}}\geq 1+\varepsilon_{0},
\end{equation}%
where%
\begin{equation}
\gamma _{i}=\liminf_{t\rightarrow +\infty }\frac{x(\tau _{i}(t))}{x(t)}%
<\lambda _{i}^{\ast }(\varepsilon ).
\end{equation}%
On the other hand, for any $\delta >0$ there exists $t_{\delta }$ such that
\begin{equation*}
\frac{x(\tau _{i}(t))}{x(t)}\geq \gamma _{i}-\delta \quad \text{for}\quad
t\geq t_{\delta }.
\end{equation*}%
Dividing $(1.1)$ by $x(t)$ and integrating from $\tau _{i}(t)$ to $t$ \ for
sufficiently large $t,$ and taking into account $(3.10)$ and the last
inequality, we have
\begin{equation*}
\frac{x(\tau _{i}(t))}{x(t)}\geq \exp \bigg(\int_{\tau _{i}(t)}^{t}\frac{%
x(\tau _{i}(s))}{x(s)}\,p(s)\,ds\bigg)\geq e^{(p_{i}-\varepsilon )(\gamma
_{i}-\delta )}\quad \text{for large \ }t.
\end{equation*}%
Therefore,
\begin{equation*}
\gamma _{i}=\liminf_{t\rightarrow +\infty }\frac{x(\tau _{i}(t))}{x(t)}\geq
e^{(p_{i}-\varepsilon )(\gamma _{i}-\delta )}
\end{equation*}%
which implies
\begin{equation*}
\gamma _{i}\geq e^{\gamma _{i}(p_{i}-\varepsilon )}.
\end{equation*}%
In view of (3.11),\ this is a contradiction and therefore (3.9) is true.
Since $\lambda _{i}^{\ast }(\varepsilon )\rightarrow \lambda _{i}^{\ast }$
as $\varepsilon \rightarrow 0$, (3.9) implies (3.8) and the proof of the
lemma is complete.
\end{proof}

Using this lemma, as in Theorem 3.1, we can prove the following.

\begin{theo}
Let $(3.1)$ be fulfilled and for some non-decreasing functions $\sigma _{i}\in C([t_{0},+\infty ))$
$(i=1,\dots ,m)$
\begin{align}
\limsup_{\varepsilon \rightarrow 0+}& \Bigg(\limsup_{t\rightarrow +\infty
}\prod_{j=1}^{m}\bigg(\prod_{i=1}^{m}\int_{\sigma _{j}(t)}^{t}p_{i}(s)\times
\notag \\
& \times \exp \bigg(\int_{\tau _{i}(s)}^{\sigma
_{i}(t)}\sum_{i=1}^{m}(\lambda _{i}^{\ast }-\varepsilon )p_{i}(\xi )d\xi %
\bigg)ds\bigg)^{\frac{1}{m}}\Bigg)>\frac{1}{m^{m}}\,,
\end{align}%
where $\lambda _{i}^{\ast }$ $(i=1,\dots ,m)$ is the
smallest root of the equation $(3.8^{\prime })$. Then all solutions of the
equation $(1.1)$ oscillate.
\end{theo}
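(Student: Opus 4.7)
The plan is to mimic the proof of Theorem 3.1 line by line, but to replace the crude pointwise bound (3.5) on the ratio $x(\tau_i(t))/x(t)$ with the sharper asymptotic lower bound supplied by Lemma 3.1. I suppose, for contradiction, that $x(t)$ is an eventually positive solution. Exactly as in Theorem 3.1, equation $(1.1)$ forces $x$ to be non-increasing for all large $t$, the identity (3.3) holds, and integrating $(1.1)$ over $[\sigma_j(t),t]$ yields the same starting inequality (3.4), namely $x(\sigma_j(t))\geq \sum_{i=1}^{m}\int_{\sigma_j(t)}^{t}p_i(s)x(\tau_i(s))\,ds$ for $j=1,\dots,m$.

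The improvement is as follows. The statement tacitly assumes that (3.7) holds so that the roots $\lambda_i^*$ of $(3.8^{\prime})$ are well defined. By Lemma 3.1, for every $\varepsilon\in(0,\min_i p_i)$ there is $t_\varepsilon$ such that $x(\tau_i(t))/x(t)\geq \lambda_i^*-\varepsilon$ for every $i$ and all $t\geq t_\varepsilon$. Plugging this estimate into the representation (3.3) on the interval $[\tau_i(s),\sigma_i(t)]$, which is legitimate because (3.1) together with the monotonicity of $\sigma_i$ gives $\tau_i(s)\leq \sigma_i(t)$ when $s\leq t$, produces
\[
x(\tau_i(s))\geq x(\sigma_i(t))\exp\Bigg\{\int_{\tau_i(s)}^{\sigma_i(t)}\sum_{k=1}^{m}(\lambda_k^*-\varepsilon)\,p_k(\xi)\,d\xi\Bigg\}.
\]
This is the analogue of (3.6) with the inner iterated exponential replaced by the sharper constants $\lambda_k^*-\varepsilon$.

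Feeding this lower bound into (3.4) and then applying the arithmetic--geometric mean inequality to the resulting $m$-term sum, exactly as in the proof of Theorem 3.1, yields
\[
x(\sigma_j(t))\geq m\Bigg[\prod_{i=1}^{m}x(\sigma_i(t))\Bigg]^{1/m}\Bigg[\prod_{i=1}^{m}\int_{\sigma_j(t)}^{t}p_i(s)\exp\Bigg(\int_{\tau_i(s)}^{\sigma_i(t)}\sum_{k=1}^{m}(\lambda_k^*-\varepsilon)p_k(\xi)\,d\xi\Bigg)ds\Bigg]^{1/m}
\]
for each $j=1,\dots,m$. Taking the product over $j$, setting $w(t)=\prod_{j=1}^{m}x(\sigma_j(t))>0$, and cancelling the common factor $w(t)=\prod_i x(\sigma_i(t))$ on both sides, I obtain the bound $1/m^m$ for the inner $\limsup$ in (3.13) for every sufficiently small $\varepsilon>0$. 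Passing to $\limsup_{\varepsilon\to 0+}$ then contradicts (3.13), finishing the argument.

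The main delicate point I anticipate is the bookkeeping around Lemma 3.1: the inequality $x(\tau_i(t))/x(t)\geq \lambda_i^*-\varepsilon$ holds only for $t\geq t_\varepsilon$, so the integral inequality above must be derived for $t$ large relative to fixed $\varepsilon$, and only afterwards can I take first the $\limsup$ in $t$ and then the $\limsup$ in $\varepsilon\to 0+$ appearing in (3.13); this is precisely the purpose of the double $\limsup$ in the hypothesis. Once this two-parameter limit is arranged correctly, the AM--GM step and the cancellation $w(t)/w(t)=1$ are formally identical to those in Theorem 3.1 and introduce no additional difficulty.
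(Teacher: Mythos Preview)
Your proposal is correct and follows exactly the approach indicated in the paper, which merely states that the result is proved ``using this lemma, as in Theorem 3.1'': you replace the crude bound (3.5) by the asymptotic estimate $x(\tau_i(t))/x(t)\geq \lambda_i^\ast-\varepsilon$ from Lemma 3.1, feed it into (3.3) to obtain the analogue of (3.6), and then repeat the AM--GM and product-cancellation steps verbatim. Your remarks on the implicit hypothesis (3.7) and on the order of the two $\limsup$'s are accurate and in fact supply more detail than the paper itself.
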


When $m=1,$ that is in the case of the equation (1.2) with one
argument, from Theorems 3.1 and 3.2 we have, respectively, the following corollaries.

\begin{coro}
Let
\begin{equation}
C:=\limsup_{t\rightarrow +\infty }\int_{\sigma (t)}^{t}p(s)\exp \Bigg(%
\int_{\tau (s)}^{\sigma (t)}p(\xi )\exp \bigg(\int_{\tau (\xi )}^{\xi }p(u)du%
\bigg)d\xi \Bigg)ds>1.
\end{equation}%
Then all solutions of the equation $(1.2)$ oscillate.
\end{coro}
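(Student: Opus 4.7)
The strategy is to deduce Corollary 3.1 as a direct specialization of Theorem 3.1 to the case $m=1$. Equation (1.1) with $m=1$ is precisely equation (1.2), and by hypothesis a non-decreasing $\sigma \in C([t_{0},+\infty))$ with $\tau(t) \le \sigma(t) \le t$ is available; if such a $\sigma$ is not given a priori, one may take $\sigma(t) := \sup_{s\le t}\tau(s)$, as already used in Theorem~2.3, which is automatically non-decreasing and lies in $[\tau(t),t]$.

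Substituting $m=1$ throughout condition (3.2) of Theorem 3.1 makes everything collapse: the two products $\prod_{j=1}^{m}$ and $\prod_{i=1}^{m}$ each reduce to a single factor; the inner sums $\sum_{i=1}^{m} p_{i}(\cdot)$ become simply $p(\cdot)$; the fractional exponents $\frac{1}{m}$ equal $1$; and the threshold $\frac{1}{m^{m}}$ equals $1$. What remains on the left is exactly the integral appearing in (3.13), and the requirement becomes $C > 1$. Hence Theorem 3.1 applies directly and every solution of (1.2) oscillates.

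For transparency one may alternatively unfold the proof of Theorem 3.1 in the single-argument setting. Assume for contradiction that $x$ is an eventually positive solution; then $x'(t) = -p(t)x(\tau(t)) \le 0$, so $x$ is non-increasing. Rewriting (1.2) as $x'/x = -p \cdot x(\tau)/x$ and integrating gives the analogue of (3.3); the monotonicity bound $x(\tau(\xi)) \ge x(\xi)$ yields $x(\tau(t))/x(t) \ge \exp\bigl(\int_{\tau(t)}^{t} p(\xi)\,d\xi\bigr)$ as in (3.5); feeding this back into the integrated identity produces the double-exponential lower bound on $x(\tau(s))$ in terms of $x(\sigma(t))$ corresponding to (3.6); substituting into $x(\sigma(t)) \ge \int_{\sigma(t)}^{t} p(s)\,x(\tau(s))\,ds$ and dividing by $x(\sigma(t))>0$ gives $1 \ge \int_{\sigma(t)}^{t} p(s)\exp(\cdots)\,ds$, and taking $\limsup$ contradicts $C>1$.

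The only real task is notational: one must check that each $m=1$ collapse of the factors and exponents in (3.2) matches the corresponding term in (3.13). No new analytic ingredient is needed beyond those already used to prove Theorem 3.1.
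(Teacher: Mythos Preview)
Your proof is correct and matches the paper's approach exactly: the paper simply states that Corollary~3.1 follows from Theorem~3.1 by setting $m=1$, and you have spelled this specialization out carefully (including the optional unfolding of the contradiction argument). Note only that in the paper's numbering the corollary's displayed condition is (3.14), not (3.13)---equation~(3.13) is the hypothesis of Theorem~3.2---so adjust your cross-references accordingly.
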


\begin{coro}
Let
\begin{equation}
\limsup_{\varepsilon \rightarrow 0}\Bigg(\limsup_{t\rightarrow +\infty
}\int_{\sigma (t)}^{t}p(s)\exp \bigg(\int_{\tau (s)}^{\sigma (t)}(\lambda
^{\ast }-\varepsilon )p(\xi )d\xi \bigg)ds\Bigg)>1\,.
\end{equation}%
Then all solutions of the equation $(1.2)$ oscillate.
\end{coro}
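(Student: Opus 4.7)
The plan is to specialize the argument behind Theorem 3.2 to the scalar case $m=1$, which eliminates the arithmetic–geometric mean step and leaves a direct estimate. Assume for contradiction that equation (1.2) admits an eventually positive solution $x(t)$; then $x'(t)=-p(t)x(\tau(t))\leq 0$ eventually, so $x$ is non-increasing. Since $C>1/e$ by (3.14) and (2.3), and in particular $\liminf_{t\to\infty}\int_{\tau(t)}^{t}p(s)\,ds=:p>0$, Lemma 3.1 applies with $m=1$ and yields $\liminf_{t\to\infty}x(\tau(t))/x(t)\geq \lambda^{\ast}$, where $\lambda^{\ast}$ is the smallest root of $e^{p\lambda}=\lambda$.

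Fix $\varepsilon\in(0,\lambda^{\ast})$. By the lemma, there exists $t_{\varepsilon}$ with $x(\tau(\xi))/x(\xi)\geq \lambda^{\ast}-\varepsilon$ for all $\xi\geq t_{\varepsilon}$. Integrating (1.2) from $\sigma(t)$ to $t$ gives
\[
x(\sigma(t))\geq \int_{\sigma(t)}^{t}p(s)\,x(\tau(s))\,ds.
\]
To bound $x(\tau(s))$ from below for $s\in[\sigma(t),t]$, I would use the scalar analogue of (3.3): dividing (1.2) by $x$ and integrating from $\tau(s)$ up to $\sigma(t)$ (which dominates $\tau(s)$ by (3.1) for $m=1$) gives
\[
x(\tau(s))=x(\sigma(t))\exp\Bigl(\int_{\tau(s)}^{\sigma(t)} p(\xi)\,\tfrac{x(\tau(\xi))}{x(\xi)}\,d\xi\Bigr)\geq x(\sigma(t))\exp\Bigl(\int_{\tau(s)}^{\sigma(t)}(\lambda^{\ast}-\varepsilon)\,p(\xi)\,d\xi\Bigr),
\]
for $t$ so large that $\tau(s)\geq t_{\varepsilon}$ on the relevant interval.

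Substituting this back and dividing by $x(\sigma(t))>0$ yields
\[
1\geq \int_{\sigma(t)}^{t}p(s)\exp\Bigl(\int_{\tau(s)}^{\sigma(t)}(\lambda^{\ast}-\varepsilon)\,p(\xi)\,d\xi\Bigr)ds.
\]
Taking $\limsup_{t\to+\infty}$ and then $\limsup_{\varepsilon\to 0^{+}}$ contradicts (3.14), completing the proof.

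The only nonroutine step is ensuring that when we invoke the lower bound $x(\tau(\xi))/x(\xi)\geq \lambda^{\ast}-\varepsilon$ inside the inner integral, the integration variable $\xi$ stays in the region $\xi\geq t_{\varepsilon}$; this is handled by choosing $t$ large enough that $\tau(s)\geq t_{\varepsilon}$ for every $s\in[\sigma(t),t]$, which is possible because $\tau(t)\to\infty$ and $\sigma(t)\geq \tau(t)$. Everything else is a direct transcription of the proof of Theorem 3.1 (or 3.2) with $m=1$, where the AM–GM inequality becomes trivial and the product $w(t)=\prod_j x(\sigma_j(t))$ reduces to $x(\sigma(t))$ itself.
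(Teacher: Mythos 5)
Your argument is correct and is essentially the paper's own route: the paper obtains Corollary~3.2 by specializing Theorem~3.2 to $m=1$, and Theorem~3.2 is proved exactly as you do, by feeding the lower bound $x(\tau(\xi))/x(\xi)\ge\lambda^{\ast}-\varepsilon$ from Lemma~3.1 into the integrate-over-$[\sigma(t),t]$ scheme of the proof of Theorem~3.1. The one blemish is your claim that the oscillation condition together with (2.3) yields $\liminf_{t\to\infty}\int_{\tau(t)}^{t}p(s)\,ds>0$ --- it does not; that is the implicit standing hypothesis (3.7) needed to define $\lambda^{\ast}$ in the first place, so you should simply assume it (as the paper tacitly does) rather than try to derive it from the $\limsup$ condition.
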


In the case of monotone arguments we have the following.

\begin{theo}
Let $\tau _{i}$ be non-decreasing
functions and
\begin{align*}
\limsup_{t\rightarrow +\infty }& \prod_{j=1}^{m}\Bigg[\prod_{i=1}^{m}\int_{%
\tau _{j}(t)}^{t}p_{i}(s)\exp \bigg(\int_{\tau _{i}(s)}^{\tau
_{i}(t)}\sum_{i=1}^{m}p_{i}(\xi )\times \\
& \times \exp \bigg(\int_{\tau _{i}(\xi )}^{\xi }\sum_{i=1}^{m}p_{i}(u)du%
\bigg)d\xi \bigg)ds\Bigg]^{\frac{1}{m}}>\frac{1}{m^{m}}.\,
\end{align*}%
or
\begin{align*}
\limsup_{\varepsilon \rightarrow 0+}& \Bigg(\limsup_{t\rightarrow +\infty
}\prod_{j=1}^{m}\bigg(\prod_{i=1}^{m}\int_{\tau _{j}(t)}^{t}p_{i}(s)\times \\
& \times \exp \bigg(\int_{\tau _{i}(s)}^{\tau _{i}(t)}\sum_{i=1}^{m}(\lambda
_{i}^{\ast }-\varepsilon )p_{i}(\xi )d\xi \bigg)ds\bigg)^{\frac{1}{m}}\Bigg)>%
\frac{1}{m^{m}}\,,
\end{align*}%
where $\lambda _{i}^{\ast }$ $(i=1,\dots ,m)$ is the
smallest root of the equation $(3.8^{\prime })$. Then all solutions of the
equation $(1.1)$ oscillate.
\end{theo}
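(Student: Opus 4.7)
The statement of Theorem 3.3 is essentially a direct specialization of Theorems 3.1 and 3.2 to the setting in which the retarded arguments $\tau_i$ themselves are non-decreasing. The plan is to exhibit an admissible choice of the auxiliary functions $\sigma_i$ that reduces the hypotheses of those two theorems to the two displayed conditions here; no new analytic work is required.

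First I would observe that whenever $\tau_i\in C([t_0,+\infty),\mathbb{R}^+)$ is non-decreasing with $\tau_i(t)\le t$, the function $\sigma_i(t):=\tau_i(t)$ is itself continuous, non-decreasing, and trivially satisfies the sandwich condition $\tau_i(t)\le \sigma_i(t)\le t$ required by (3.1). Hence this choice is admissible in the hypotheses of both Theorem 3.1 and Theorem 3.2.

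Next I would substitute $\sigma_i=\tau_i$ into (3.2): the outer integration limits $\int_{\sigma_j(t)}^{t}$ become $\int_{\tau_j(t)}^{t}$ and the inner limits $\int_{\tau_i(s)}^{\sigma_i(t)}$ become $\int_{\tau_i(s)}^{\tau_i(t)}$, so that (3.2) converts verbatim into the first displayed condition of Theorem 3.3; Theorem 3.1 then yields oscillation of every solution of (1.1). Performing the identical substitution in (3.13) turns it into the second displayed condition, and Theorem 3.2 gives the same conclusion. There is no genuinely hard step: the only item to verify is that the non-decreasing $\tau_i$ are themselves legitimate choices of the auxiliary functions $\sigma_i$, after which both assertions follow at once from the theorems already established. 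In this sense, Theorem 3.3 is essentially a corollary of the two preceding results.
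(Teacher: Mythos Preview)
Your proposal is correct and matches the paper's approach: Theorem~3.3 is presented as the immediate specialization of Theorems~3.1 and~3.2 obtained by taking $\sigma_i=\tau_i$ when the $\tau_i$ are non-decreasing, and the paper offers no separate argument beyond this. Your verification that this choice satisfies~(3.1) and that the substitution converts~(3.2) and~(3.13) into the two displayed conditions is exactly what is needed.
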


\begin{coro}
Let $\tau _{i}$ be non-decreasing functions and
\begin{equation}
\limsup_{t\rightarrow +\infty }\prod_{j=1}^{m}\bigg(\prod_{i=1}^{m}\int_{%
\tau _{j}(t)}^{t}p_{i}(s)ds\bigg)^{\frac{1}{m}}>\frac{1}{m^{m}}.
\end{equation}%
Then all solutions of the equation $(1.1)$ oscillate.
\end{coro}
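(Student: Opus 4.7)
The plan is to deduce this corollary directly from the first hypothesis of Theorem~3.3 by noting that, under the standing sign assumption $p_i \geq 0$, each of the nested exponential weights appearing there is bounded below by $1$. The monotonicity of $\tau_i$ is used twice: once to ensure that $\tau_i(s)\leq \tau_i(t)$ whenever $s\leq t$ (so the outer integral in Theorem~3.3 has nonnegative length and nonnegative integrand), and once to justify taking the ``$\sigma_i=\tau_i$'' specialisation in Theorem~3.3.

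Concretely, since $\tau_i(\xi)\leq \xi$ and $p_k\geq 0$, the innermost exponent $\int_{\tau_i(\xi)}^{\xi}\sum_{k=1}^{m}p_k(u)\,du$ is $\geq 0$, and since $\tau_i$ is non-decreasing the outer integral $\int_{\tau_i(s)}^{\tau_i(t)}\sum_{k=1}^{m}p_k(\xi)\exp(\cdots)\,d\xi$ is also $\geq 0$. Both exponentials are therefore $\geq 1$, so for all sufficiently large $t$ and every $i,j\in\{1,\ldots,m\}$,
\begin{equation*}
\int_{\tau_j(t)}^{t}p_i(s)\exp\bigg(\int_{\tau_i(s)}^{\tau_i(t)}\sum_{k=1}^{m}p_k(\xi)\exp\Big(\int_{\tau_i(\xi)}^{\xi}\sum_{k=1}^{m}p_k(u)\,du\Big)d\xi\bigg)ds \;\geq\; \int_{\tau_j(t)}^{t}p_i(s)\,ds.
\end{equation*}
Taking products over $i$, then $m$-th roots, then products over $j$, and finally $\limsup$ as $t\to+\infty$ preserves this inequality since all quantities involved are nonnegative. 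Hypothesis~(3.16) thus forces the left-hand side to exceed $1/m^m$, which is exactly the first condition of Theorem~3.3; that theorem then yields the oscillation of every solution of equation~(1.1).

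There is no real obstacle in this argument: it is a pure monotonicity comparison showing that (3.16) is strictly stronger than the first hypothesis of Theorem~3.3, and once the pointwise inequality between the two weighted integrals is in place the rest is immediate.
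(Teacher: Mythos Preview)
Your proposal is correct and matches the paper's intended derivation: Corollary~3.3 is stated without proof as an immediate consequence of Theorem~3.3, and the only step needed is precisely the one you give---bounding both nested exponentials below by $1$ (using $p_k\geq 0$, $\tau_i(\xi)\leq\xi$, and the monotonicity of $\tau_i$ to ensure $\tau_i(s)\leq\tau_i(t)$ for $s\leq t$), which shows that (3.16) implies the first hypothesis of Theorem~3.3.
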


\begin{coro}
Let $\tau _{i}$ be non-decreasing
functions, $p/ _{i}(t)\geq p(t)$ $(i=1,\dots ,m)$ and
\begin{equation}
\limsup_{t\rightarrow +\infty }\prod_{j=1}^{m}\int_{\tau
_{j}(t)}^{t}p(s)\,ds>\frac{1}{m^{m}},
\end{equation}%
Then all solutions of the equation $(1.1)$ oscillate.
\end{coro}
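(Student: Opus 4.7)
The plan is to derive this corollary as an immediate consequence of Corollary 3.3, which handles the non-decreasing case without the monotonic bound hypothesis. The idea is that the uniform lower bound $p_i(t)\ge p(t)$ lets us replace every $p_i$ inside the product by the smaller $p$ and still majorise the quantity $1/m^m$.

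First I would note that, under the hypothesis $p_i(t)\ge p(t)$ for $i=1,\dots,m$, we have for each pair $(i,j)$ the pointwise inequality
\[
\int_{\tau_j(t)}^{t} p_i(s)\,ds \;\ge\; \int_{\tau_j(t)}^{t} p(s)\,ds .
\]
Multiplying these $m$ inequalities (one per $i$) and taking the $m$-th root, the inner factor in Corollary 3.3 satisfies
\[
\Bigl(\prod_{i=1}^{m}\int_{\tau_j(t)}^{t} p_i(s)\,ds\Bigr)^{1/m}
\;\ge\;\Bigl(\int_{\tau_j(t)}^{t} p(s)\,ds\Bigr)^{m\cdot \frac{1}{m}}
\;=\;\int_{\tau_j(t)}^{t} p(s)\,ds.
\]

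Next I would take the product over $j=1,\dots,m$ and then the $\limsup$ as $t\to+\infty$, obtaining
\[
\limsup_{t\to+\infty}\prod_{j=1}^{m}\Bigl(\prod_{i=1}^{m}\int_{\tau_j(t)}^{t} p_i(s)\,ds\Bigr)^{1/m}
\;\ge\;\limsup_{t\to+\infty}\prod_{j=1}^{m}\int_{\tau_j(t)}^{t} p(s)\,ds \;>\;\frac{1}{m^{m}},
\]
where the strict inequality on the right is exactly the hypothesis (3.17). Thus the sufficient condition (3.16) of Corollary 3.3 is satisfied, and hence every solution of (1.1) oscillates.

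There is no real obstacle in this proof: it is a clean monotonicity-in-$p_i$ reduction combined with the AM--GM-style collapse that makes the inner $m$-th root equal $\int_{\tau_j(t)}^{t} p(s)\,ds$ when all $p_i$ are replaced by the common lower bound $p$. The only minor point to verify is that the replacement is uniform in $j$, so that the product over $j$ and the passage to $\limsup$ preserve the inequality; this is immediate since the inequality is pointwise in $t$ and the $\limsup$ is monotone in the integrand.
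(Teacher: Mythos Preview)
Your argument is correct and is precisely the reduction the paper intends: Corollary~3.4 is stated immediately after Corollary~3.3 without a separate proof, and the implicit step is exactly the pointwise bound $\int_{\tau_j(t)}^{t} p_i(s)\,ds \ge \int_{\tau_j(t)}^{t} p(s)\,ds$ followed by the product over $i$ and $j$. One cosmetic remark: what you call an ``AM--GM-style collapse'' is really just the trivial fact that the geometric mean of $m$ identical quantities equals that quantity; no AM--GM is needed there.
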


\begin{coro}
Let $\tau _{i}$ be non-decreasing
functions, $p_{i}(t)\geq p=$\textrm{const} and
\begin{equation}
p^{m}\limsup_{t\rightarrow +\infty }\prod_{i=1}^{m}(t-\tau _{i}(t))>\frac{1}{%
m^{m}}.
\end{equation}%
Then all solutions of the equation $(1.1)$ oscillate.
\end{coro}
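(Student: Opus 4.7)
The plan is to obtain this corollary as an immediate specialization of Corollary 3.4, by taking the lower-bound function $p(t)$ appearing there to be the constant $p$. With this choice, the monotonicity assumption on the $\tau_i$ and the pointwise bound $p_i(t)\geq p(t)$ required by Corollary 3.4 match the hypotheses available here (with $p_i(t)\geq p$ trivially implying $p_i(t)\geq p(t)$ for the constant function $p(t)\equiv p$).

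The only computation is to evaluate the integrals appearing in the hypothesis of Corollary 3.4 when $p$ is constant. For each index $j=1,\dots,m$ one has
$$\int_{\tau_j(t)}^{t} p \, ds = p\bigl(t-\tau_j(t)\bigr),$$
so that
$$\prod_{j=1}^{m}\int_{\tau_j(t)}^{t} p\, ds = p^{m}\prod_{j=1}^{m}\bigl(t-\tau_j(t)\bigr).$$
Since $p^m$ is a positive constant, it commutes with $\limsup$, and the assumption (3.17) of the present corollary becomes precisely
$$\limsup_{t\to +\infty}\prod_{j=1}^{m}\int_{\tau_j(t)}^{t}p\, ds > \frac{1}{m^{m}},$$
which is exactly the hypothesis (3.16) of Corollary 3.4. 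An application of that corollary then yields oscillation of every solution of equation $(1.1)$.

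There is essentially no obstacle here: the argument is a one-line reduction, and the only subtlety worth noting is that $p>0$, which guarantees that factoring $p^m$ out of the $\limsup$ preserves the direction of the inequality. Should one prefer a self-contained derivation that bypasses Corollary 3.4, the same substitution can be performed directly in Theorem 3.3 with $\sigma_i(t)=\tau_i(t)$, but routing through Corollary 3.4 is the most economical path.
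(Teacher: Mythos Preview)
Your argument is correct and matches the paper's intended derivation: the corollary is simply Corollary~3.4 with the constant choice $p(t)\equiv p$, after which $\int_{\tau_j(t)}^{t}p\,ds=p\,(t-\tau_j(t))$ and the constant $p^m>0$ factors out of the $\limsup$. One minor correction: in the paper's numbering the hypothesis of Corollary~3.4 is (3.17) and that of the present corollary is (3.18), so your cross-references are off by one.
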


\section{Remarks and Examples}

\begin{rem}
It should be pointed out that the conditions
$(3.2)$ and $(3.13)$ of Theorems $3.1$ and $3.2$ present for the first time
sufficient conditions (in terms of $\limsup)$ for the oscillation of all
solutions to the equation $(1.1)$ with several non-monotone arguments. They
are also independent and essentially improve all\ the related oscillation
conditions in the literature. Even in the case where $m=1,$ the improvement
is substantial.
\end{rem}

\begin{rem}
Observe that when $m=1,$ the above conditions
$(3.16)$ and $(3.17)$ reduces to the $($classical$)$ condition $(2.1)$.
\end{rem}

The following examples illustrate the significance of our results.

\begin{exmp}
Let $a_{1},\delta \in (0,+\infty )$ and
consider the sequences $\{a_{k}\}_{n=1}^{+\infty }$, where $%
a_{k+1}=3+2\delta +a_{k}$ $(k=1,2,\dots )$. Choose $\alpha \in (0,1)$ and $%
\varepsilon >0$ such that
\begin{equation}
\frac{\ln (1+e)}{e-\varepsilon }<\alpha <\ln 2.
\end{equation}%
Consider equation $(1.2)$, where $\tau (t)=t-1$ and
\begin{equation}
p(t)=\begin{cases}
\;\dfrac{1}{e}\qquad \qquad \quad \text{for}\quad a_{k}\leq t\leq a_{k}+1, \\[2mm]
\dfrac{\alpha e-1}{\delta e}\,t+\dfrac{\delta +(1+a_{k})(1-\alpha e)}{\delta
e} \\[1mm]
\qquad \qquad \qquad \text{for}\quad a_{k}+1\leq t\leq a_{k}+\delta +1, \\[2mm]
\;\alpha \quad \qquad \qquad \text{for}\quad a_{k}+\delta +1\leq t\leq
a_{k+1}-\delta , \\[2mm]
\dfrac{1-\alpha e}{\delta e}\,t+\dfrac{\alpha \delta e+(a_{k}+\delta
+3)(\alpha e-1)}{\delta e} \\[1mm]
\qquad \quad \qquad \;\;\;\text{for}\quad a_{k+1}-\delta \leq t\leq a_{k+1}
\\[2mm]
\qquad \qquad \qquad \qquad \qquad \qquad (k=1,2,\dots ).%
\end{cases}%
\end{equation}

It is obvious that
\begin{equation}
\liminf_{t\rightarrow +\infty }\int_{\tau (t)}^{t}p(s)\,ds=\frac{1}{e}\quad
\text{and}\quad \lambda ^{\ast }=e
\end{equation}%
(see Lemma 2.1). By (4.2) and (4.1)
\begin{align}
\limsup_{t\rightarrow +\infty }& \int_{\tau (t)}^{t}p(s)\exp \bigg\{%
\int_{\tau (s)}^{\tau (t)}p(s)\,ds\bigg\}\leq  \notag \\
& \leq \limsup_{t\rightarrow +\infty }\int_{t-1}^{t}\alpha \,e^{\alpha
(t-s)}\,ds=e^{\alpha }-1<1. 
\end{align}%
On the other hand by (4.1)--(4.3) we have
\begin{align}
\limsup_{t\rightarrow +\infty }& \int_{\tau (t)}^{t}p(s)\exp \bigg\{%
(e-\varepsilon )\int_{\tau (s)}^{\tau (t)}p(\xi )\,d\xi \bigg\}ds=  \notag \\
& =\limsup_{t\rightarrow +\infty }\int_{t-1}^{t}\alpha \exp \bigg\{%
(e-\varepsilon )^{\alpha (t-s)}\bigg\}ds=  \notag \\
&=\frac{1}{e-\varepsilon }\Big(e^{\alpha (e-\varepsilon )}-1\Big)>\frac{e}{%
e-\varepsilon }>1.  
\end{align}%
Therefore, according to (4.4) the condition (2.5) is not fulfilled. On the
other hand by (4.5) the condition (3.13) holds a.e. and therefore all
solutions of equation (1.2) oscillate.
\end{exmp}

\begin{exmp}[{cf. \cite{3}}] We consider a generalisation of an
example presented in \cite{3}, where the equation%
\begin{equation*}
x^{\prime }(t)+\frac{1}{e}x(\tau (t))=0,\text{ }t\geq 0,
\end{equation*}%
with the retarded argument%
\begin{equation*}
\tau (t):=\begin{cases}
t-1,\,&t\in [3n,3n+1], \\
-3t+(12n+3),\,&t\in [3n+1,3n+2], \\
5t-(12n+13),\,&t\in [3n+2,3n+3],%
\end{cases}%
\end{equation*}%
was studied. Here we discuss the more general equation
\begin{equation}
x^{\prime }(t)+px(\tau (t))=0,\text{ }t\geq 0,\text{ }p>0,
\end{equation}%
and illustrate how our methodology can be utilized to prove the existence of
oscillatory solutions for some range of the parameter $p$. In this case, as
in \cite{3}, one may choose the function%
\begin{equation*}
\sigma (t)=\begin{cases}
t-1,\,&t\in[3n,3n+1], \\
3n,\,&t\in [3n+1,3n+2.6], \\
5t-(12n+13),\,&t\in [3n+2.6,3n+3].%
\end{cases}%
\end{equation*}%
Now note that, since $\tau (t)\leq t-1$,
\begin{equation*}
\int_{\tau (t)}^{t}pdu\geq \int_{t-1}^{t}pdu=p.
\end{equation*}%
The choice as in \cite{3} of $t_{n}=3n+3$ gives
\begin{align*}
C &=\limsup_{t\rightarrow +\infty }\int_{\sigma (t)}^{t}p\exp \bigg(%
\int_{\tau (s)}^{\sigma (t)}p\exp \bigg(\int_{\tau (\xi )}^{\xi }pdu\bigg)%
d\xi \bigg)ds \\
&\geq \lim_{n\rightarrow +\infty }\int_{3n+2}^{3n+3}p\exp \bigg(%
\int_{5s-(12n+13)}^{3n+2}p\exp (p)d\xi \bigg)ds=\frac{1}{5}\,\left( {{e}%
^{5\,p{{e}^{p}}}}-1\right) {{e}^{-p}}.
\end{align*}%
The inequality
\begin{equation*}
\frac{1}{5}\,\left( {{e}^{5\,p{{e}^{p}}}}-1\right) {{e}^{-p}}>1
\end{equation*}%
is satisfied for (the numbers that follow are rounded to the third decimal
place unless exact) $p\in \lbrack 0.303,0.358].$ Thus, for $p\in \lbrack
0.303,0.358]$ the condition (3.14) of Corollary 2.1 is satisfied and
therefore all solutions to the above equation (4.6) oscillate. Observe,
however, that when $p\in \lbrack 0.303,0.358]$ in (4.6), we find
\begin{gather*}
A=\limsup_{t\rightarrow \infty }\int_{\sigma (t)}^{t}pds=p\cdot (2.6)<1,\\
\mathfrak{a}:=\liminf_{t\rightarrow \infty }\int_{\tau (t)}^{t}pds=p<\frac{1}{e}
\end{gather*}%
and%
\begin{align*}
\int_{\sigma (3n+3)}^{3n+3}p\exp \left\{ \int_{\tau (s)}^{\sigma
(3n+3)}pd\xi \right\} ds&=\int_{3n+2}^{3n+3}p\exp \left\{
\int_{5s-(12n+13)}^{3n+2}pd\xi \right\} ds= \\
&=\frac{1}{5}(e^{5p}-1)<1.
\end{align*}%
That is, none of the known oscillation conditions (2.1), (2.2) (and also the
conditions (2.6)--(2.11)) and (2.5) is satisfied.
\end{exmp}

\begin{rem}
It is obvious that if for some $i_{0}\in\{1,\dots ,m\}$ all solutions of the equation
\begin{equation*}
x^{\prime }(t)+p_{i_{{}_{0}}}(t)\,x(\tau _{i_{{}_{0}}}(t))=0
\end{equation*}%
oscillate, then all solutions of the equation (1.1) also oscillate. 
\end{rem}

\begin{exmp}
Let $p,\Delta _{1},\Delta _{2}\in (0,+\infty )$ and
consider the sequences $\big\{t_{k}\big\}_{k=1}^{\infty }$ such that $%
t_{k}\uparrow +\infty $ for $k\uparrow +\infty $, $t_{k}+2\Delta <t_{k+1}$ $%
(k=1,2,\dots )$, where $\Delta =\max \{\Delta _{i},\;i=1,2\}$. Choose $p$, $%
\Delta _{1}$ and $\Delta _{2}$ such that
\begin{equation}
p^{2}\,\Delta _{1}\,\Delta _{2}>\frac{1}{4}
\end{equation}%
and
\begin{equation}
p\,\Delta _{i}<1\quad (i=1,2).
\end{equation}%
Let $p(t)=p$ for $t\in \lbrack t_{k},t_{k}+\Delta ]$ $(k=1,2,\dots )$ and $%
p(t)=0$ for $t\in R_{+}\backslash \underset{k=1}{\overset{\infty }{U}}%
[t_{k},t_{k}+\Delta ]$.

According to (4.7) it is obvious that the condition (3.18) is fulfilled,
where $m=2$ and $\tau _{i}(t)=t-\Delta _{i}$ $(i=1,2)$, a.e. and therefore
all solutions to equation (1.1) are oscillatory. However, for the equations
\begin{equation*}
x^{\prime }(t)+p(t)\,x(t-\Delta _{i})=0\quad (i=1,2)
\end{equation*}%
by (4.8), we have
\begin{equation*}
\limsup_{t\rightarrow +\infty }\int_{t-\Delta _{i}}^{t}p(s)\,ds<1\quad
(i=1,2)
\end{equation*}%
and
\begin{equation*}
\liminf_{t\rightarrow +\infty }\int_{t-\Delta _{i}}^{t}p(s)\,ds=0\quad
(i=1,2).
\end{equation*}%
\end{exmp}

\begin{rem}
In the above mentioned Example 4.3, by a
solution, we mean an absolutely continuous function which satisfies the
corresponding equation almost everywhere.
\end{rem}

\begin{exmp}
Consider the equation%
\begin{equation}
x^{\prime }(t)+p_{1}x(\tau _{1}(t))+p_{2}x(\tau _{2}(t))=0,\,t\geq
0,\,p_{1},p_{2}>0,
\end{equation}%
where%
\begin{align*}
\tau _{1}(t)& =\begin{cases}
t-1, & t\in [3n,3n+1], \\
-3t+(12n+3), & t\in [3n+1,3n+2], \\
5t-(12n+13), & t\in [3n+2,3n+3],%
\end{cases}\\
\tau _{2}(t)& =\begin{cases}
t-2, & t\in [3n,3n+1], \\
-t+6n, & t\in [3n+1,3n+2], \\
3t-(6n+8), & t\in [3n+2,3n+3].%
\end{cases}%
\end{align*}%
We can take
\begin{align*}
\sigma _{1}(t)& =\begin{cases}
t-1, & t\in [3n,3n+1], \\
3n, & t\in [3n+1,3n+2.6], \\
5t-(12n+13), & t\in [3n+2.6,3n+3],%
\end{cases}
\\
\sigma _{2}(t)& =\begin{cases}
t-2, & t\in [3n,3n+1], \\
3n-1, & t\in [3n+1,3n+2.\bar{3}], \\
3t-(6n+8), & t\in [3n+2.\bar{3},3n+3].%
\end{cases}%
\end{align*}%
Note that, since $\tau _{1}(t)\leq t-1$ and $\tau _{2}(t)\leq t-2$, we have
\begin{equation*}
\int_{\tau _{1}(t)}^{t}du\geq \int_{t-1}^{t}du=1,\quad \int_{\tau
_{2}(t)}^{t}du\geq \int_{t-2}^{t}du=2.
\end{equation*}%
Set $P=p_{1}\exp (p_{1}+p_{2})+p_{2}\exp (2p_{1}+2p_{2}).$ The choice of $%
t_{n}=3n+3$ gives%
\begin{equation*}
\limsup_{t\rightarrow +\infty }\prod_{j=1}^{2}\bigg(\prod_{i=1}^{2}\int_{%
\sigma _{j}(t)}^{t}p_{i}\exp \bigg(\int_{\tau _{i}(s)}^{\sigma
_{i}(t)}\sum_{i=1}^{2}p_{i}\exp \bigg(\int_{\tau _{i}(\xi )}^{\xi
}(p_{1}+p_{2})du\bigg)d\xi \bigg)ds\bigg)^{\frac{1}{2}}
\end{equation*}%
\begin{equation*}
\geq \lim_{n\rightarrow +\infty }\prod_{j=1}^{2}\bigg(\prod_{i=1}^{2}\int_{%
\sigma _{j}(3n+3)}^{3n+3}p_{i}\exp \bigg(\int_{\tau _{i}(s)}^{\sigma
_{i}(3n+3)}\sum_{i=1}^{2}p_{i}\exp \bigg(\int_{\tau _{i}(\xi )}^{\xi
}(p_{1}+p_{2})du\bigg)d\xi \bigg)ds\bigg)^{\frac{1}{2}}
\end{equation*}

\begin{equation*}
\geq \lim_{n\rightarrow +\infty }\prod_{j=1}^{2}\bigg(\int_{\sigma
_{j}(3n+3)}^{3n+3}p_{1}\exp \bigg(\int_{\tau _{1}(s)}^{3n+2}Pd\xi \bigg)ds%
\bigg)^{\frac{1}{2}}\times \bigg(\int_{\sigma _{j}(3n+3)}^{3n+3}p_{2}\exp %
\bigg(\int_{\tau _{2}(s)}^{3n+1}Pd\xi \bigg)ds\bigg)^{\frac{1}{2}}
\end{equation*}%
\begin{eqnarray*}
&=&\lim_{n\rightarrow +\infty }\bigg(\int_{3n+2}^{3n+3}p_{1}\exp \bigg(%
\int_{\tau _{1}(s)}^{3n+2}Pd\xi \bigg)ds\bigg)^{\frac{1}{2}}\times \bigg(%
\int_{3n+2}^{3n+3}p_{2}\exp \bigg(\int_{\tau _{2}(s)}^{3n+1}Pd\xi \bigg)ds%
\bigg)^{\frac{1}{2}} \\
&&\times \bigg(\int_{3n+1}^{3n+3}p_{1}\exp \bigg(\int_{\tau
_{1}(s)}^{3n+2}Pd\xi \bigg)ds\bigg)^{\frac{1}{2}}\times \bigg(%
\int_{3n+1}^{3n+3}p_{2}\exp \bigg(\int_{\tau _{2}(s)}^{3n+1}Pd\xi \bigg)ds%
\bigg)^{\frac{1}{2}}
\end{eqnarray*}%
\begin{eqnarray*}
&=&\lim_{n\rightarrow +\infty }\bigg(\int_{3n+2}^{3n+3}p_{1}\exp \bigg(%
\int_{5s-(12n+13)}^{3n+2}Pd\xi \bigg)ds\bigg)^{\frac{1}{2}}\times \bigg(%
\int_{3n+2}^{3n+3}p_{2}\exp \bigg(\int_{3s-(6n+8)}^{3n+1}Pd\xi \bigg)ds\bigg)%
^{\frac{1}{2}} \\
&&\times \bigg(\int_{3n+1}^{3n+2}p_{1}\exp \bigg(\int_{-3s+(12n+3)}^{3n+2}Pd%
\xi \bigg)ds+\int_{3n+2}^{3n+3}p_{1}\exp \bigg(\int_{5s-(12n+13)}^{3n+2}Pd%
\xi \bigg)ds\bigg)^{\frac{1}{2}} \\
&&\times \bigg(\int_{3n+1}^{3n+2}p_{2}\exp \bigg(\int_{-s+6n}^{3n+1}Pd\xi %
\bigg)ds+\int_{3n+2}^{3n+3}p_{2}\exp \bigg(\int_{3s-(6n+8)}^{3n+1}Pd\xi %
\bigg)ds\bigg)^{\frac{1}{2}} \\
&=:&D(p_{1},p_{2}).
\end{eqnarray*}
Let $p_{1}=0.1$ then, by direct computation, we get
\begin{equation*}
D>\frac{1}{4},
\end{equation*}%
if $p_{2}\geq 0.158$. That\ is, when $p_{1}=0.1$ and $p_{2}\geq 0.158$ in
equation (4.9), the condition (3.1) of Theorem 3.1 is satisfied and
therefore all solutions to this equation oscillate.

Note that since the delays are not monotone, Theorem 2.5 cannot be applied
to this example. We now compare our result with Theorem 2.4. Note that
\begin{equation*}
\tau _{1}(t),\tau _{2}(t)\leq \sigma _{1}(t),\ \text{for every}\ t>0.
\end{equation*}%
The choice $p_{1}=0.1$, $p_{2}=0.158$ gives
\begin{equation*}
\liminf_{t\rightarrow \infty }\int_{\sigma
_{1}(t)}^{t}(p_{1}+p_{2}\,)ds=p_{1}+p_{2}=0.258<\frac{1}{e}.
\end{equation*}
\end{exmp}

\section*{Acknowledgement} 

This paper was partially written during a visit of G. Infante to the Department of Mathematics of the
University of Ioannina. G. Infante is grateful to the people of the
aforementioned Department for their kind and warm hospitality

\section*{$^*$ Acknowledgement} 

The work was supported by the Sh.
Rustaveli National Science Foundation. Grant No. 31/09.

\end{document}